\documentclass[onecolumn,reqno,11pt]{amsart}
\usepackage{amsthm}
\usepackage{times,amssymb,amsmath,amsfonts}

\usepackage{hyperref}

\usepackage{color}
\usepackage{amsmath,amssymb,amsthm}
\usepackage{wrapfig}
\usepackage{ifpdf}

\textwidth 6.5in

\textheight 8.4in

\hoffset -2.0cm

\voffset -1.5cm

\sloppy

\frenchspacing
\righthyphenmin=2

\newtheorem{thm}{Theorem}[section]

\newtheorem{definition}{Definition}[section]
\newtheorem{remark}{Remark}

\title {The kissing number in 48 dimensions for codes with certain forbidden distances
is 52\,416\,000} 
\author{Peter Boyvalenkov, Danila Cherkashin}
\address{ Institute of Mathematics and Informatics, Bulgarian Academy of Sciences,
8 G Bonchev Str., 1113  Sofia, Bulgaria}
\email{peter@math.bas.bg}\email{jiocb@math.bas.bg}
	
\begin{document}

\date{}
\maketitle

\begin{abstract}  
We prove that the kissing number in 48 dimensions among
antipodal spherical codes with certain forbidden inner products is 
52\,416\,000. Constructions of attaining codes as kissing configurations 
of minimum vectors in even unimodular extremal lattices are
well known since the 1970's. We also prove that corresponding spherical
11-designs with the same cardinality are minimal. We use appropriate modifications of
the linear programming bounds for spherical codes and designs introduced
by Delsarte, Goethals and Seidel in 1977. 
\end{abstract}

\section{Introduction}	

Recently, Gon{\c{c}}alves and Vedana~\cite{gonccalves2023sphere} found that under an additional constraint the optimal sphere packing density in dimension 48 is reached if and only if the centers of the packing 
form a 48-dimensional even unimodular extremal lattice. Namely, the additional assumption is that a distance between centers from the packing never belongs to $\left (\frac{4r}{\sqrt{3}}, \frac{2\sqrt{5}r}{\sqrt{3}} \right)$, where $r$ is the radius of a sphere. Up to the date there are known four proper lattices: $P_{48p}, P_{48q}, P_{48m}$ and $P_{48n}$, see~\cite{nebe2014fourth}. All four lattices constitute (via their minimum norm vectors) spherical 
codes of cardinality 52\,416\,000 which are kissing configurations and antipodal spherical 11-designs.  

The result of \cite{gonccalves2023sphere} follows the solutions of the general sphere packing problem in 2017 in 8 dimensions by Viazovska \cite{Via17} and in 24 dimensions by 
Cohn, Kumar, Miller, Radchenko, and Viazovska \cite{CKMRV} who first developed and applied the necessary techniques. 

Now assume that $r = 1$ and consider the kissing number problem: how many non-overlapping unit spheres 
can touch a unit sphere in 48 dimensions. 
Note that the kissing condition means exactly that the spherical code formed by the touching points has no inner product greater than $1/2$ between distinct points.
Translating the above distances to inner products on $\mathbb{S}^{47}$, we 
consider antipodal spherical kissing codes without inner products 
in the set 
\[ F:=\left(-\frac{1}{3},-\frac{1}{6}\right) \cup \left(\frac{1}{6}, \frac{1}{3}\right).\]

Using standard linear programming tools we prove that any antipodal spherical code with no inner products in $F$ cannot have cardinality greater 
than 52\,416\,000 (Theorem~\ref{kiss-48}). This bound is 
achieved by (at least) four non-isomorphic antipodal codes 
mentioned above. Therefore, these four codes are maximal
among the antipodal codes without inner products in $F$. This result follows the only three known kissing numbers in 
dimensions $n \geq 4$; 240 in 8 dimensions and 196\,560 in
24 dimensions, found independently in 1979 by Levenshtein 
\cite{Lev79} and Odlyzko and Sloane \cite{OS79}, and 24 in
4 dimensions found in 2003 by Musin \cite{Mus03}.

We also prove (Theorems~\ref{11des-48} and~\ref{11des-48-2}) 
that spherical 11-designs on $\mathbb{S}^{47}$ without 
inner products either in $F$ or in the set
\[ F^\prime:=\left(-\frac{1}{2},-\frac{1}{3}\right) \cup \left(\frac{1}{3}, \frac{1}{2}\right)\]
cannot have less than 52\,416\,000 points as the attaining examples are the same, featuring now as spherical 11-designs.
Therefore, these four designs are minimal
among the spherical 11-designs without inner products 
either in $F$ or in $F^\prime$. A comprehensive survey on spherical designs can be found in~\cite{bannai2009survey}.

We also compute the distance distribution of the attaining codes (designs)
which appears to be the same for all of them and coincides, of course, with
the distance distribution of the known codes from even unimodular extremal lattices.

Upper bounds on kissing numbers were obtained by linear programming (see \cite[Chapters 9, 13]{CS}, \cite{Lev79,OS79,Lev98,BDM12} and references therein) and, since 2008, by semidefinite programming (see \cite{BV08,LL22} and references therein). We are not aware about bounds for spherical codes with forbidden inner products. The best upper bound for 
unrestricted codes is 867\,897\,072 (taken
from \href{https://cohn.mit.edu/kissing-numbers}{the page of Henry Cohn}).

Lower bounds for the minimum possible size of (unrestricted) spherical designs of given dimension and strength were obtained by Delsarte, Goethals, and Seidel \cite{DGS} with the very definition of spherical designs. Later, some linear programming bounds were found in \cite{Boy95,Y97}. We do not have seen bounds for spherical designs via semidefinite programming. Again, we are not aware about bounds for spherical designs with forbidden inner products (but see \cite{BBD99}). 

The paper is organized as follows. In Section 2 we give some 
notations, definitions and basic facts about Gegenbauer polynomials and spherical designs. Section 3 is devoted to the 
necessary modifications of the classical linear programming
bounds. In Section 4 we describe briefly the origin of the attaining codes and designs. Sections 5 and 6 are devoted to
the derivation of the linear programming bounds for our main 
objects: the kissing number in 48 dimensions with forbidden
inner products in $F$ and the minimal spherical 11-designs 
in 48 dimensions with forbidden inner products either in $F$
or in $F^\prime$.

\section{Notation and preliminaries}

Let $\mathbb{S}^{n-1}$ stand for the unit sphere in $\mathbb {R}^n$ centered at the origin. A \textit{spherical code} is a finite set $C \subset \mathbb{S}^{n-1}$. A spherical code $C$
is called \textit{antipodal}, if $C=-C$.

Given a spherical code $C \subset \mathbb{S}^{n-1}$, denote by 
\[ I(C)=\{ t = \langle x,y \rangle : x,y \in C, x \neq y \} \] 
the set of inner products of distinct points of $C$. If $C$ is antipodal, then $-1 \in I(C)$ 
and the set $I(C) \setminus \{-1\}$ is symmetric 
about the origin.

For any $x \in C$ and $t \in I(C)$, denote by 
\[ A_t(x):=|\{y \in C : x \cdot y = t\}| \]
the number of the points of $C$ with inner product $t$ with
$x$. Then the system of nonnegative integers
$(A_t(x): t \in I(C))$ is called \textit{distance distribution} 
of $C$ with respect to $x$. When the distance distribution 
does not depend on the choice of $x$, the code $C$ is 
called {\it distance invariant} \cite{DGS} and $x$ is omitted in the notation. Note that
$A_{-1}(x) \in \{0,1\}$ and $A_{-1}=1$
means that the code is antipodal. If $C$ is antipodal, then $A_t(x)=A_{-t}(x)$ for every 
$t \in I(C) \setminus \{-1\}$ and every $x \in C$.

Define the sequence of Gegenbauer polynomials $\{P_i^{(n)}\}_{i=0}^\infty$ by $P_0^{(n)}(t) = 1$, $P_1^{(n)}(t) = t$,
\[
P_i^{(n)}(t) = \frac{(n + 2i - 4) t P_{i-1}^{(n)}(t) - (i-1) P_{i-2}^{(n)}(t)}{n + i - 3}
\]
for $i \geq 2$ (cf. \cite{Szego1975}). Clearly $P_i^{(n)}(1) = 1$ for every $i$. Such polynomials are orthogonal with the weight $w(t) = (1-t^2)^{(n-3)/2}$, i.e.
\[
\int_{-1}^{1} P_i^{(n)}(t) \cdot P_j^{(n)}(t) \cdot w(t) dt = 0,
\]
for $i \neq j$. Thus every polynomial $f$ has a unique representation 
\[ f(t)=\sum_{i=0}^{\deg{f}} f_iP_i^{(n)}(t), \] where
\begin{equation} \label{eq:orthorep}
f_i = \frac{\int_{-1}^{1} f(t) \cdot P_i^{(n)}(t) \cdot w(t) dt}{\int_{-1}^{1} \left(P_i^{(n)}(t) \right)^2 \cdot w(t) dt}.    
\end{equation}

A crucial property of Gegenbauer polynomials in the derivation of linear programming bounds is their positive definiteness, in the sense that
\[
M_i (C) := \sum_{x,y \in C} P_i^{(n)} (\langle x,y \rangle) \geq 0
\]
for every code $C$ and every positive integer $i$ \cite{Sch42}. We call $M_i(C)$ the \textit{$i$-th moment} of the code $C$. It is clear that
$M_i(C)=0$ for odd $i$ and antipodal $C$.

\begin{definition}
A spherical $\tau$-design is a spherical 
code $C \subset \mathbb{S}^{n-1}$ such that
\[ \int_{\mathbb{S}^{n-1}} p(x) d\sigma_n(x)= \frac{1}{|C|} \sum_{x \in C} p(x) \]
($\sigma_n $ is the normalized surface measure) holds for all polynomials $p(x) = p(x_1,x_2,\ldots,x_n)$ of degree at most $\tau$. 
\end{definition}

Equivalently, $C \subset \mathbb{S}^{n-1}$ is a spherical $\tau$-design if 
\begin{equation} \label{def-ft}
    \sum_{y \in C} f( \langle x, y \rangle) = f_0 \cdot |C|
\end{equation}
for every point $x \in \mathbb{S}^{n-1}$ and every polynomial $f$ of degree at most $\tau$, where $f_0$ is defined by~\eqref{eq:orthorep}, see, for example, \cite[Definition 1.10]{FL}.
Another equivalent way to define a $\tau$-design $C$ is to demand the equalities $M_1(C) = M_2(C) = \dots = M_\tau(C) = 0$.

We will use \eqref{def-ft} with $x \in C$ for computation
of the distance distribution of $C$ as in \cite{Boy95a}.  


\section{Linear programming bounds}\label{s3}

Here we provide slight modifications of the classical linear programming (LP) bounds for spherical codes and designs \cite{DGS,KL}. The
proofs follow directly from the identity
\[ f(1) \cdot |C|+\sum_{x,y \in C, x \neq y} f(\langle x,y \rangle)=
f_0 \cdot |C|^2 + \sum_{i=1}^{\deg{f}} f_iM_i(C), \]
where $f(t)=\sum_{i=0}^{\deg{f}} f_iP_i^{(n)}(t)$ 
and $C \subset \mathbb{S}^{n-1}$ is a code. The
identity itself is obtained by computing both 
sides of $f(t)=\sum_{i=0}^{\deg{f}} f_iP_i^{(n)}(t)$ in all inner products of points $C$ and summing. 

\begin{thm}[LP for unrestricted codes~\cite{DGS,KL}] \label{general-lp}
 Let $T \subset [-1,1)$ and $f(t) \in \mathbb{R}[t]$ be such that

{\rm (A1)} $f(t) \leq 0$ for all $t \in T$;

{\rm (A2)} $f_0>0$ and $f_i \geq 0$ for all $i$ in the Gegenbauer expansion $f(t)=\sum_{i=0}^{\deg{f}} f_iP_i^{(n)}(t)$.

Then 
\[ |C| \leq \frac{f(1)}{f_0} \]
for any spherical code $C \subset \mathbb{S}^{n-1}$ such that $I(C) \subset T$. 
If, in addition, $C$ is a spherical $\tau$-design, then the condition {\rm (A2)} can be replaced by

{\rm (A2-$\tau)$} $f_0>0$ and $f_i \geq 0$ for all $i \geq \tau+1$ in the Gegenbauer expansion of $f$.

\end{thm}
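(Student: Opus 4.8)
The plan is to derive the bound directly from the fundamental identity stated just before the theorem, namely
\[ f(1) \cdot |C|+\sum_{x,y \in C, x \neq y} f(\langle x,y \rangle)=
f_0 \cdot |C|^2 + \sum_{i=1}^{\deg{f}} f_iM_i(C). \]
First I would invoke the hypothesis $I(C) \subset T$ together with condition (A1): since every inner product $\langle x,y\rangle$ for distinct $x,y \in C$ lies in $T$, and $f(t) \le 0$ on $T$, the entire sum $\sum_{x \neq y} f(\langle x,y\rangle)$ is nonpositive. Dropping it from the left-hand side can only decrease that side, so $f(1)\cdot|C| \ge f_0\cdot|C|^2 + \sum_{i\ge 1} f_i M_i(C)$.

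Next I would bound the right-hand side from below. By Schoenberg's positive definiteness (quoted in the preliminaries), $M_i(C) \ge 0$ for every $i \ge 1$, and by condition (A2) each coefficient $f_i \ge 0$; hence every term $f_i M_i(C)$ in the sum is nonnegative and may be discarded to obtain $f(1)\cdot|C| \ge f_0 \cdot |C|^2$. Since $f_0 > 0$ and $|C| > 0$, dividing through by $f_0|C|$ yields $|C| \le f(1)/f_0$, which is the claimed bound.

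For the design refinement I would observe that the only place the full strength of (A2) was used is in discarding the tail sum $\sum_{i=1}^{\deg f} f_i M_i(C)$. If $C$ is a spherical $\tau$-design, then by the characterization recalled in the preliminaries we have $M_1(C)=\cdots=M_\tau(C)=0$, so the terms with $1 \le i \le \tau$ vanish \emph{regardless of the sign} of the corresponding $f_i$. Thus to guarantee that the surviving sum $\sum_{i=\tau+1}^{\deg f} f_i M_i(C)$ is still nonnegative, it suffices to assume $f_i \ge 0$ only for $i \ge \tau+1$, which is exactly condition (A2-$\tau$); the argument otherwise proceeds verbatim.

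This proof is essentially a bookkeeping argument, so I do not expect a genuine obstacle; the only point requiring mild care is the handling of odd-indexed moments, since $M_i(C)=0$ for odd $i$ when $C$ is antipodal, but this only helps and need not be invoked for the general statement. The substance of the theorem lies not in this derivation but in the later task of exhibiting an explicit polynomial $f$ satisfying (A1) and (A2) (or (A2-$\tau$)) whose ratio $f(1)/f_0$ meets the target value $52\,416\,000$; that design of a suitable $f$ on the forbidden-distance sets $F$ and $F'$ is where the real work of the paper will reside.
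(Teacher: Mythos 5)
Your proposal is correct and follows exactly the route the paper indicates: it cites the same identity stated just before the theorem and the proof is the standard bookkeeping you describe (drop the nonpositive sum via (A1), drop the nonnegative moment terms via (A2) and Schoenberg's positive definiteness, and use $M_1(C)=\cdots=M_\tau(C)=0$ for the design refinement). No discrepancies.
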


If a spherical code $C \subset \mathbb{S}^{n-1}$ with $I(C) \subset T$ 
and a polynomial $f$ satisfying (A1) and (A2) are such that
$|C|=f(1)/f_0$, then the set of the zeros of $f$ contains $I(C)$ and the
equalities $f_iM_i(C)=0$ follow. In particular, if $f_i>0$ for
$i=1,2,\ldots,m$ for some $m \leq \deg{f}$, then $C$ is a 
spherical $m$-design.

\begin{thm}[LP for antipodal codes~\cite{DGS,KL}] \label{general-antipodal-lp}
 Let $T \subset [-1,1)$ and $f(t) \in \mathbb{R}[t]$ be such that

{\rm (B1)} $f(t) \leq 0$ for all $t \in T$;

{\rm (B2)} $f_0>0$ and $f_i \geq 0$ for all even $i$ in the Gegenbauer expansion $f(t)=\sum_{i=0}^{\deg{f}} f_iP_i^{(n)}(t)$.

Then 
\[ |C| \leq \frac{f(1)}{f_0} \]
for any antipodal spherical code $C \subset \mathbb{S}^{n-1}$ such that $I(C) \subset T$.

If, in addition, $C$ is a spherical $\tau$-design, then the condition {\rm (B2)} can be replaced by

{\rm (B2-$\tau)$} $f_0>0$ and $f_i \geq 0$ for all even $i \geq \tau+1$ in the Gegenbauer expansion of $f$.

\end{thm}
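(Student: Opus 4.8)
The plan is to derive the bound directly from the key identity
\[ f(1) \cdot |C|+\sum_{x,y \in C, x \neq y} f(\langle x,y \rangle)= f_0 \cdot |C|^2 + \sum_{i=1}^{\deg f} f_iM_i(C), \]
exactly as in the proof of Theorem~\ref{general-lp}, but now exploiting antipodality to relax the hypotheses on the odd-index Gegenbauer coefficients. First I would bound the left-hand side from above: since $I(C) \subset T$, every inner product $\langle x,y \rangle$ with $x \neq y$ lies in $T$, so (B1) forces $f(\langle x,y \rangle) \leq 0$; hence the double sum is nonpositive and the left-hand side is at most $f(1) \cdot |C|$.

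Next I would bound the right-hand side from below, and this is where antipodality enters. Because $C=-C$, the moment $M_i(C)$ vanishes for every odd $i$, as recorded in Section~2. Consequently the sum $\sum_{i=1}^{\deg f} f_iM_i(C)$ collapses to a sum over even indices only; for those indices (B2) gives $f_i \geq 0$ while Schoenberg's positive-definiteness gives $M_i(C) \geq 0$, so every surviving term is nonnegative and the right-hand side is at least $f_0 \cdot |C|^2$.

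Combining the two estimates through the identity yields $f_0 \cdot |C|^2 \leq f(1) \cdot |C|$; dividing by $|C|>0$ and then by $f_0>0$ produces the claimed inequality $|C| \leq f(1)/f_0$. For the design refinement I would observe that if $C$ is a spherical $\tau$-design then $M_i(C)=0$ for $i=1,\dots,\tau$, so the terms with $i \leq \tau$ drop out of the moment sum regardless of the signs of their coefficients. The only moments that can contribute are then those with \emph{even} $i \geq \tau+1$, and imposing $f_i \geq 0$ for precisely these indices, namely (B2-$\tau$), suffices to keep the right-hand side at least $f_0 \cdot |C|^2$.

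I do not anticipate a genuine obstacle: once the two sign conditions are verified, the bound is an immediate consequence of the stated identity. The only point demanding care is the bookkeeping of which moments vanish---odd ones by antipodality, low-order ones by the design property---so that the relaxed conditions (B2) and (B2-$\tau$) are seen to be exactly what is needed, and no stronger requirement is silently assumed.
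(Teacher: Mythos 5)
Your proposal is correct and follows exactly the route the paper indicates: the bound is read off from the stated identity by using (B1) to bound the left-hand side above by $f(1)\cdot|C|$, and antipodality (vanishing of odd moments), Schoenberg's positive definiteness $M_i(C)\geq 0$, and (B2) (respectively the design property $M_1(C)=\dots=M_\tau(C)=0$ together with (B2-$\tau$)) to bound the right-hand side below by $f_0\cdot|C|^2$. The paper leaves these verifications implicit, so your write-up is simply a fuller version of the same argument.
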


If an antipodal spherical code $C \subset \mathbb{S}^{n-1}$ with 
$I(C) \subset T$ and a polynomial $f$ satisfying (B1) and (B2) are 
such that $|C|=f(1)/f_0$, then (exactly as above) the set of the zeros of $f$ contains $I(C)$ and the equalities $f_iM_i(C)=0$ follow. 
In particular, if $f_i>0$ for $i=2,4,\ldots,m$ for some even $m$, $2 \leq m \leq \deg{f}$, then $C$ is an antipodal spherical $(m+1)$-design. 

\begin{remark}
Theorem \ref{general-antipodal-lp} with conditions
(B1) and (B2) gives, in fact, the linear programming bound for codes in the real projective
space.
\end{remark}

\begin{thm}[LP for spherical designs~\cite{DGS}] \label{general-lp-des}

 Let $\tau$ be positive integer, $T \subset [-1,1)$, and $f(t) \in \mathbb{R}[t]$ be such that

{\rm (C1)} $f(t) \geq 0$ for all $t \in T$;

{\rm (C2)} $f_i \leq 0$ for all $i \geq \tau+1$ in the Gegenbauer expansion $f(t)=\sum_{i=0}^{\deg{f}} f_iP_i^{(n)}(t)$.

Then 
\[ |C| \geq \frac{f(1)}{f_0} \]
for any spherical $\tau$-design $C \subset \mathbb{S}^{n-1}$ such that $I(C) \subset T$. 
\end{thm}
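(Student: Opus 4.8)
The plan is to prove Theorem~\ref{general-lp-des} (the LP bound for spherical designs) by exploiting the same fundamental identity that underlies the coding bounds, read now in the reverse direction. Concretely, for a spherical $\tau$-design $C\subset\mathbb{S}^{n-1}$ with $I(C)\subset T$ and a polynomial $f$ satisfying (C1) and (C2), I would start from
\[
f(1)\cdot|C|+\sum_{\substack{x,y\in C\\ x\neq y}} f(\langle x,y\rangle)
= f_0\cdot|C|^2 + \sum_{i=1}^{\deg f} f_i M_i(C),
\]
which holds for any code. The design hypothesis is what makes this usable: since $C$ is a $\tau$-design, the moments vanish in low degree, $M_1(C)=M_2(C)=\dots=M_\tau(C)=0$, so on the right-hand side only the tail $\sum_{i\geq\tau+1} f_i M_i(C)$ survives.

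Next I would bound the two sides against each other. On the right, condition (C2) says $f_i\leq 0$ for every $i\geq\tau+1$, while positive-definiteness gives $M_i(C)\geq 0$ for all $i$; hence each surviving term $f_i M_i(C)\leq 0$, and therefore
\[
f_0\cdot|C|^2 + \sum_{i=\tau+1}^{\deg f} f_i M_i(C) \;\leq\; f_0\cdot|C|^2.
\]
On the left, condition (C1) says $f(t)\geq 0$ for all $t\in T$, and since $I(C)\subset T$ every inner product $\langle x,y\rangle$ of distinct points lies in $T$, so each term of the sum $\sum_{x\neq y} f(\langle x,y\rangle)$ is nonnegative; dropping it only decreases the left side, giving $f(1)\cdot|C|\leq$ (left side). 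Chaining these through the identity yields
\[
f(1)\cdot|C| \;\leq\; f_0\cdot|C|^2.
\]

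Finally, to extract the stated lower bound I would divide by $|C|$ (which is positive for any nonempty code) to obtain $f(1)\leq f_0\,|C|$, and then rearrange to $|C|\geq f(1)/f_0$. One should note that for a $\tau$-design with $\tau\geq 1$ one has $f_0>0$ automatically: $f_0$ is the average value of $f$ over the sphere, and since the design reproduces this average, $f_0=\frac{1}{|C|}\sum_{y\in C}f(\langle x,y\rangle)$ is a sum of nonnegative terms (by (C1), as all inner products lie in $T$) that includes the strictly positive diagonal contribution $f(1)>0$, so dividing by $f_0$ preserves the inequality direction.

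The argument is essentially a short chain of sign comparisons, so there is no serious computational obstacle; the one subtle point to handle carefully is the direction of every inequality, since the design bound runs opposite to the packing bounds. In particular I must confirm that $f(1)>0$ and $f_0>0$ so that the final division is legitimate and does not flip the inequality—this is the only place where the logic could silently go wrong, and it is worth spelling out explicitly rather than treating as routine.
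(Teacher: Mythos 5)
Your main chain of reasoning is exactly the paper's proof: the authors prove all three LP theorems of Section~\ref{s3} at once from the displayed identity
\[ f(1)\cdot|C|+\sum_{x,y\in C,\,x\neq y} f(\langle x,y\rangle)= f_0\cdot|C|^2+\sum_{i=1}^{\deg f} f_i M_i(C), \]
using $M_i(C)=0$ for $i\le\tau$, $f_iM_i(C)\le 0$ for $i\ge\tau+1$ (by (C2) and positive definiteness), and $f\ge 0$ on $I(C)\subset T$. That part is correct and needs no change.

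The one place where you go beyond the paper --- the claim that $f_0>0$ holds automatically --- is the place where your argument breaks. You invoke the reproduction property $f_0=\frac{1}{|C|}\sum_{y\in C}f(\langle x,y\rangle)$, but identity~\eqref{def-ft} is only valid for polynomials of degree at most $\tau$, whereas the theorem allows $\deg f>\tau$ (condition (C2) is precisely about the coefficients of index $>\tau$, so the interesting case is $\deg f>\tau$). Moreover, you assert that the diagonal term $f(1)$ is strictly positive, but $1\notin T$ (since $T\subset[-1,1)$), so (C1) says nothing about $f(1)$. You are right that the statement is silently incomplete --- unlike (A2) and (B2), condition (C2) does not require $f_0>0$, and the final division needs it --- but the correct patch is different: the sign analysis already gives $f(1)|C|\le f_0|C|^2$, hence $f_0\ge f(1)/|C|$; so whenever $f(1)>0$ (the only case in which the bound carries information, and the case in the paper's applications, where $g(1)=35/36$) one gets $f_0>0$ for free, and otherwise one should simply add $f_0>0$ as a hypothesis, as in \cite{DGS}.
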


We first note that if $\deg{f} \leq \tau$, the condition (C2) is automatically satisfied. If a spherical $\tau$-design 
$C \subset \mathbb{S}^{n-1}$ with $I(C) \subset T$ and a 
polynomial $f$ satisfying (C1) and (C2) are such that
$|C|=f(1)/f_0$, then (again) the set of the zeros of $f$ contains $I(C)$ and the
equalities $f_iM_i(C)=0$ follow. In particular, if $\tau < \deg{f}$ 
and $f_i<0$ for $i=\tau+1,\ldots,m$ for some $m \geq \tau+1$, 
then $C$ is a spherical $m$-design. 

The conditions for attaining the bounds from
Theorems \ref{general-lp}, \ref{general-antipodal-lp}, and \ref{general-lp-des} 
suggest how good polynomials could be constructed. If we have a good candidate (a code like in dimension 48), it is clear that we need double zeros at the internal points of the set of allowed inner products while simple zeros are possible in the end points with a care for proper sign changes. Then we turn to the strength of the candidate as a spherical design and/or its antipodality (in our case, we have antipodal 11-designs) to complete the construction. 

\section{Kissing codes in 48 dimensions without inner products in \texorpdfstring{$F$}{F} and \texorpdfstring{$F^\prime$}{F'}}

In this section we mainly follow Chapter 7 of the glorious book~\cite{CS}.
\textit{A unimodular lattice} is an integral lattice of determinant $1$ or $-1$. The lattice is called \textit{even} if every vector of the lattice has even norm. Now consider an even unimodular lattice in dimension $n = 24k$; it is known that its minimal vectors have norm at most $2k+2$ and a lattice attaining this bound is called \textit{extremal}.
Recall that there are known four examples of even unimodular extremal lattices, namely $P_{48p}, P_{48q}, P_{48m}$ and $P_{48n}$ 
(the latter lattice was discovered in 2013~\cite{nebe2014fourth}, which is later than the contemporary edition of the book~\cite{CS}).

Modular form theory gives that every even unimodular lattice in 48 dimensions has 52\,416\,000 minimal vectors. Since the difference of any two lattice vectors also belongs to the lattice, the set of minimal vectors form an antipodal kissing code of that cardinality.

\begin{thm}[Venkov,~\cite{venkov1984even}] \label{thm:venkov}
    Let $C$ be a nonempty set of all vectors of an extremal even unimodular lattice of dimension
$n = 24k$ with a fixed norm. Then $C$ is a (properly rescaled) spherical 11-design.
\end{thm}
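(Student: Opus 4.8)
The plan is to translate the design condition into a statement about theta series with harmonic coefficients and then invoke the theory of modular forms for the full modular group. First I would recall the harmonic characterization of designs: since the space of polynomials of degree at most $11$ decomposes into spherical harmonics and $\int_{\mathbb{S}^{n-1}} p \, d\sigma_n = 0$ for every harmonic homogeneous $p$ of positive degree, a shell of $L$ rescaled to $\mathbb{S}^{n-1}$ is a spherical $11$-design if and only if $\sum_{x \in C} p(x) = 0$ for every harmonic homogeneous polynomial $p$ with $1 \le \deg p \le 11$. By homogeneity this vanishing is unaffected by the rescaling, so it suffices to treat the unnormalized lattice shell. The odd-degree cases are immediate: because $L = -L$, each shell is invariant under $x \mapsto -x$, so an odd-degree polynomial sums to zero in antipodal pairs. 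Hence only the even degrees $d \in \{2,4,6,8,10\}$ require work.

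For those I would attach to each harmonic $p$ of degree $d$ the theta series with harmonic coefficient
\[ \theta_{L,p}(z) = \sum_{x \in L} p(x)\, q^{\langle x,x\rangle/2}, \qquad q = e^{2\pi i z}, \]
whose coefficient of $q^{m/2}$ is exactly the sum $\sum_{x \in L,\ \langle x,x\rangle = m} p(x)$ that we must show vanishes. The essential classical input (Hecke--Schoeneberg) is that, because $L$ is even and unimodular, hence self-dual, $\theta_{L,p}$ is a modular form of weight $n/2 + d = 12k + d$ for $\mathrm{SL}_2(\mathbb{Z})$, and a cusp form whenever $d > 0$; the weight shift by $d$ comes from the harmonic polynomial and the modularity for the full group from self-duality via Poisson summation. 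This is the step I expect to be the main obstacle, in the sense that it is the one genuinely external theorem, whereas the remaining steps are bookkeeping.

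The conclusion then follows from a dimension count matched against extremality. The space of cusp forms $S_{12k+d}$ for $\mathrm{SL}_2(\mathbb{Z})$ has dimension $k$ for $d \in \{4,6,8,10\}$ and dimension $k-1$ for $d = 2$, and a cusp form is determined by its first $\dim S_{12k+d}$ Fourier coefficients; thus $\theta_{L,p}$ vanishes identically as soon as the coefficients of $q^{1},\dots,q^{k}$ do. Extremality says precisely that $L$ has no nonzero vectors of norm $2,4,\dots,2k$, which annihilates exactly those coefficients. Hence $\theta_{L,p} \equiv 0$ for every harmonic $p$ of even degree at most $10$, so every coefficient vanishes and each shell is an $11$-design after rescaling. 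The argument stops here: for $d = 12$ one would additionally need the coefficient of $q^{k+1}$ to vanish, but that is the minimal-norm shell, which is nonempty, in agreement with the sharpness of the stated strength.
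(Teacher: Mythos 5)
The paper does not prove this statement---it is quoted from Venkov's work---and your argument is a correct reconstruction of the standard proof: reduce the design condition to the vanishing of $\sum_{x\in C}p(x)$ for harmonic $p$, dispose of odd degrees by antipodality, identify the even-degree sums as Fourier coefficients of the harmonic theta series $\theta_{L,p}\in S_{12k+d}(\mathrm{SL}_2(\mathbb{Z}))$ via Hecke--Schoeneberg, and kill them by comparing $\dim S_{12k+d}$ with the $k$ coefficients annihilated by extremality. The dimension counts ($k$ for $d\in\{4,6,8,10\}$, $k-1$ for $d=2$, $k+1$ for $d=12$) are right; the only loose phrase is the closing sharpness remark, since nonemptiness of the minimal shell does not by itself force the degree-$12$ harmonic sum to be nonzero, but that aside is not part of the proof of the stated theorem.
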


Furthermore, every spherical code formed by the minimal vectors of an even unimodular extremal lattice in 48 dimensions is distance invariant (since it is a spherical 11-design which
has 8 distinct distances; cf. \cite[Theorem 7.4]{DGS}) 
and it has the following distance distribution:
\begin{align}
\begin{split}~\label{eq:distdist}
A_{-1} &= 1, \\
A_{1/2}=A_{-1/2} &= 36\,848, \\
A_{1/3}=A_{-1/3} &= 1\,678\,887, \\
A_{1/6}=A_{-1/6} &= 12\,608\,784, \\
A_{0}  &=  23\,766\,960.\\
\end{split}
\end{align}
The calculations can be found, in particular, in paper~\cite{ozeki2016numerical}, but they also follow from Theorems~\ref{thm:venkov} and~\ref{11des-48}.

It is worth noting that the set of $A_0=23\,766\,960$ vectors, orthogonal to a fixed one, defines a 47-dimensional kissing configuration. 

\section{Maximal spherical codes with forbidden distances}

In this section we prove that the kissing number in 48 dimensions among
antipodal spherical codes with forbidden inner products in $F$ is 
52\,416\,000.

In what follows we denote
\[ I:=\left[-1,-\frac{1}{3}\right] \cup  \left[-\frac{1}{6},\frac{1}{6}\right] \cup \left[\frac{1}{3}, \frac{1}{2}\right]=
            \left[-1,\frac{1}{2}\right] \setminus F. \] 
and assume that $C \subset \mathbb{S}^{47}$ has a set of inner products $I(C) \subset I$. 
Then Theorem \ref{general-lp} or \ref{general-antipodal-lp} implies that any polynomial $f$ which satisfies either (A1) and (A2) (or (A2-$\tau$), respectively) or (B1) and (B2) (or (B2-$\tau$), respectively), provides 
a upper bound $|C| \leq f(1)/f_0$ for the corresponding $C$ (antipodal or a $\tau$-design in the case of (A2-$\tau$) or (B2-$\tau$)). We present
such a polynomial in the next theorem.

\begin{thm} \label{kiss-48}
Let $C \subset \mathbb{S}^{47}$ be a spherical code with $I(C) \subset I$ such that $C$ is antipodal or $C$ is a 3-design.
Then $|C| \leq 52\,416\,000$. If the equality is attained, then $C$ is an antipodal spherical 11-design and, moreover, it is distance invariant and 
its distance distribution is as given in~\eqref{eq:distdist}.
\end{thm}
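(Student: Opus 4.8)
The plan is to construct an explicit polynomial $f(t)$ of degree $11$ that certifies the bound $52\,416\,000$ via the antipodal linear programming bound (Theorem~\ref{general-antipodal-lp}) and then to extract the equality information.

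**Understanding the target.**

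I need $|C| \le 52\,416\,000$ for antipodal $C$ with $I(C) \subset I$. The attaining configuration is the set of minimal vectors of an extremal even unimodular lattice $P_{48p}$ etc. I know its inner products are $\{-1, -1/2, -1/3, -1/6, 0, 1/6, 1/3, 1/2\}$. So the set of inner products (other than $1$) is exactly these $8$ values, all lying in $I$.

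**The LP certificate.**

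I want a polynomial $f$ of degree $\le 11$ satisfying conditions (B1) and (B2) of Theorem~\ref{general-antipodal-lp} with $T = I$, and with $f(1)/f_0 = 52\,416\,000$. Guided by the complementary-slackness remark after that theorem, equality forces the zeros of $f$ to contain $I(C)$. So I'd want $f$ to have double zeros at the interior points $-1/2, -1/3, -1/6, 0, 1/6, 1/3$ (these are interior to the allowed set $I$, where $f \le 0$ must hold), and a simple zero at the endpoint $1/2$ (where a sign change at the boundary of $[-1, 1/2]$ is permitted). A degree count: six double zeros plus one simple zero gives $13$ conditions, which exceeds degree $11$. So the right ansatz is subtler — I'd instead prescribe double zeros at the $5$ strictly-interior values $-1/2, -1/3, -1/6, 1/6, 1/3$ and single zeros at the boundary points $0$ — no, more carefully: since the code is antipodal, $f$ may be taken as a function of $t^2$ up to the factor handling antipodality, halving the effective degree. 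The clean route is to write $f(t) = (t + 1)\, g(t)$ where the factor $(t+1)$ absorbs the antipodal point $-1$, and then $g$ carries double zeros at the remaining interior inner products. This matches degree $11$ and is exactly the structure signaled in the paragraph following Theorem~\ref{general-lp-des}. I would solve the resulting linear system for the coefficients of $f$, then verify numerically (or via exact rational arithmetic) that $f(t) \le 0$ on all of $I$ and that every even Gegenbauer coefficient $f_i$ is nonnegative with $f_0 > 0$.

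**The hard part and the equality case.**

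The main obstacle is twofold. First, verifying (B1), i.e. that $f(t) \le 0$ throughout $I = [-1,-1/3] \cup [-1/6,1/6] \cup [1/3,1/2]$: because the prescribed double zeros pin down the shape, I expect $f \le 0$ to hold, but the sign must be checked rigorously on each of the three subintervals (a resultant or Sturm-sequence argument, or an interval-arithmetic certificate, since the forbidden gaps make this a piecewise condition). Second, confirming the Gegenbauer positivity (B2): I must compute the expansion $f(t) = \sum_{i} f_i P_i^{(48)}(t)$ exactly and check $f_i \ge 0$ for all even $i$. Once both hold and $f(1)/f_0 = 52\,416\,000$, the bound follows. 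For the equality case: if $|C| = f(1)/f_0$, the complementary slackness spelled out after Theorem~\ref{general-antipodal-lp} forces $I(C)$ to lie among the zeros of $f$ and forces $f_i M_i(C) = 0$ for all even $i$. If the construction makes $f_i > 0$ for $i = 2,4,6,8,10$, then $M_2 = M_4 = \dots = M_{10} = 0$, which together with antipodality ($M_{\text{odd}} = 0$) and $M_{11}=0$ yields a spherical $11$-design by the moment characterization in Section~2. Finally, since $I(C)$ is contained in the $8$-element zero set, $C$ is a spherical $11$-design with at most $8$ distinct distances, hence distance invariant by \cite[Theorem 7.4]{DGS}; the distance distribution~\eqref{eq:distdist} is then recovered by substituting $x \in C$ into the design identity~\eqref{def-ft} for suitable polynomials $f$, exactly as in \cite{Boy95a}, solving the resulting linear system for the multiplicities $A_t$.
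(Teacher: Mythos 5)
Your high-level strategy (a degree-$11$ LP certificate, complementary slackness for the equality case, then a Vandermonde system for the distance distribution) is the same as the paper's, but the proposal stops short of the actual content of the proof: it never produces the polynomial, and the ansatz you describe for finding it would not produce the right one. Your degree bookkeeping does not close. You correctly observe that double zeros at all six points $-1/2,\pm 1/3,\pm 1/6,0$ plus a simple zero at $1/2$ cost $13$, but your proposed fix --- $f(t)=(t+1)g(t)$ with $g$ carrying double zeros at ``the remaining interior inner products'' --- gives degree $14$ under that same list, or degree $10$ if only $-1/2$ and $0$ count as interior; neither is $11$, despite your claim that it ``matches degree~$11$.'' The observation you are missing is that $\pm 1/6$ and $\pm 1/3$ are \emph{endpoints} of the subintervals of $I$ (they abut the forbidden gaps $F$), so only \emph{simple} zeros are needed there: the sign change happens inside the gap, where no constraint is imposed. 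Double zeros are needed only at the genuinely interior points $0$ and $-1/2$, and at $-1$ one takes a double zero as well (a simple zero there gives degree $10$ and fails). This yields the paper's certificate
\[
h(t)=(t+1)^2\,t^2\left(t+\tfrac12\right)^2\left(t^2-\tfrac1{36}\right)\left(t^2-\tfrac19\right)\left(t-\tfrac12\right),
\]
of degree exactly $11$, for which $h(1)/h_0=52\,416\,000$ and all Gegenbauer coefficients except $h_3$ are positive. Without an explicit polynomial, the verifications you defer (sign on $I$, nonnegativity of the even coefficients, the value of $f(1)/f_0$) cannot even be begun, and these verifications \emph{are} the proof.

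A second gap: the theorem's hypothesis is ``antipodal \emph{or} a $3$-design,'' and you only treat the antipodal branch via Theorem~\ref{general-antipodal-lp}. For the $3$-design branch one must invoke Theorem~\ref{general-lp} with condition (A2-$3$), which works precisely because the unique negative coefficient of $h$ is $h_3$ (harmless once $M_3(C)=0$ is guaranteed by the design property). In that branch antipodality of an extremal code is not assumed but must be \emph{deduced} at the end, from $A_{-1}=1$ in the computed distance distribution. Your equality-case discussion (zeros of $f$ contain $I(C)$, $f_iM_i(C)=0$ forces the $11$-design property, then distance invariance via \cite[Theorem 7.4]{DGS} and the Vandermonde system) is correct in outline and matches the paper, but it rests on the positivity of the coefficients $h_i$ for $i\ge 4$, which again requires the explicit polynomial.
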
 

\begin{proof}
According to the remarks in the end of Section \ref{s3} we search for a polynomial with double zeros at $0$ and $-1/2$ and simple zeros at $\pm 1/3$, $\pm 1/6$ and $1/2$. A simple zero at $-1$ does not work 
and we go for a double (moreover, degree 11 is allowed by the candidate, which is an 11-design).

Thus, we consider the polynomial
\begin{eqnarray*} h(t) &:=& (t+1)^2t^2\left(t+\frac{1}{2}\right)^2\left(t^2-\frac{1}{36}\right)\left(t^2-\frac{1}{9}\right)
                               \left(t-\frac{1}{2}\right) \\
&=&t^{11}+\frac{5t^{10}}{2}+\frac{29t^9}{18}-\frac{17t^8}{36}-\frac{959t^7}{1296}-\frac{259t^6}{2592}+\frac{97t^5}{1296}+\frac{11t^4}{648}-\frac{t^3}{648}-\frac{t^2}{2592}.
\end{eqnarray*}
By this definition, we have
\[ h(t) \leq 0 \mbox{ for } t \in I, \] 
so $h(t) \leq 0$ for any $t \in I(C)$ and 
the conditions (A1) of Theorem \ref{general-lp} and (B1) of Theorem \ref{general-antipodal-lp} are satisfied with $T=I$. 

The Gegenbauer expansion $h(t)=\sum_{i=0}^{11} h_iP_i^{(48)}(t)$ has
\[ h_0=\frac{1}{13478400}, \ h_1=\frac{3961}{1758931200}, \ h_2=\frac{47}{8794656}, \ h_3=-\frac{118957}{ 811814400}, \]
\[ h_4=\frac{122059}{1563494400}, \ h_5=\frac{376856011}{32716120320}, \ h_6=\frac{231656467}{3008378880}, \ h_7=\frac{399983395}{1342199808}, \] \[ h_8=\frac{439011349}{577290240}, \ h_9=\frac{3260719}{2589120}, \ h_{10}=\frac{16303595}{14729216}, \ h_{11}=\frac{2075003}{5523456}. \]
By chance or not, $h(1)/h_0=52\,416\,000$. The only negative 
coefficient is $h_3$ which does not cause problems under our
assumptions (antipodality or the 3-design property).

If $C$ is antipodal, then we apply Theorem \ref{general-antipodal-lp} with the polynomial $h(t)$, the code $C$, and $T=I$. As said above, condition (B1) is satisfied. Inspecting the coefficients $h_i$ as given above, we see that condition (B2) is also satisfied. 
Therefore 
\[
|C| \leq \frac{h(1)}{h_0} = 52\,416\,000.
\]

If $C$ is a 3-design, then we apply Theorem \ref{general-lp} with the polynomial $h(t)$, the code $C$, and $T=I$. Clearly conditions~(A1) and (A2-$3$) hold, so again
\[
|C| \leq \frac{h(1)}{h_0} = 52\,416\,000.
\]
It also follows that any attaining 3-design $C \subset \mathbb{S}^{47}$ 
is, in fact, a spherical 11-design and all possible inner products of $C$ are 
$-1, \pm 1/2, \pm 1/3$, and $\pm 1/6$. This implies that $C$ is distance invariant \cite[Theorem 7.4]{DGS}.
Resolving the Vandermonde system which is obtained 
from \eqref{def-ft} for polynomials $f(t)=1,t,t^2\ldots,t^{8}$ (similarly to 
the proof of Theorem 7.4 in \cite{DGS}; see also \cite[Theorem 2.1]{Boy95a}), we obtain\footnote{In fact, three more equations are available but they are consequences of the first nine.} the distance distribution~\eqref{eq:distdist}. 
The so found distance distribution shows that $C$ is antipodal. 

In the computation of the distance distribution in the case of antipodal $C$ we have that $A_{-1}=1$, $A_{1/2}=A_{-1/2}$,  $A_{1/3}=A_{-1/3}$, and $A_{1/6}=A_{-1/6}$. Thus it is enough
to apply~\eqref{def-ft} with $f(t)=t^{2k}$, $k=0,1,2,3,4$. The result is, of course, the same.  
\end{proof}

\section{Minimal spherical 11-designs with forbidden distances}

In this section we prove that the kissing configurations of 52\,416\,000 points on 
$\mathbb{S}^{n-1}$ are also minimal spherical 11-designs if the forbidden 
intervals are either $F$ of $F^\prime$. 

\begin{thm} \label{11des-48}
Let $C \subset \mathbb{S}^{47}$ be a spherical $11$-design with $I(C) \subset [-1,1) \setminus F$. Then
$|C| \geq 52\,416\,000$. If the equality is attained, then 
$C$ is distance invariant and its distance distribution is as given in~\eqref{eq:distdist}.
\end{thm}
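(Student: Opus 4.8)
The plan is to invoke the linear programming bound for designs, Theorem~\ref{general-lp-des}, with $\tau = 11$ and the admissible set
\[ T = [-1,1)\setminus F = \left[-1,-\tfrac13\right]\cup\left[-\tfrac16,\tfrac16\right]\cup\left[\tfrac13,1\right), \]
and to exhibit a polynomial $f$ with $f(1)/f_0 = 52\,416\,000$ that is tailored to the candidate configuration. Following the remarks at the end of Section~\ref{s3}, $f$ should vanish at every inner product of the attaining code, namely at $-1,\pm\tfrac12,\pm\tfrac13,\pm\tfrac16,0$, with double zeros at the internal points $0,\pm\tfrac12$ of the allowed intervals and simple zeros at the endpoints $-1,\pm\tfrac13,\pm\tfrac16$. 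This leads to
\[ f(t) = (t+1)\,t^2\left(t+\tfrac12\right)^2\left(t-\tfrac12\right)^2\left(t^2-\tfrac19\right)\left(t^2-\tfrac1{36}\right), \]
of degree exactly $11$. Note that this differs from the kissing polynomial $h$ of Theorem~\ref{kiss-48}: here the zero at $\tfrac12$ is doubled while the zero at $-1$ is simple, which is forced because the admissible set now extends up to $1$ and we require $f\ge 0$ (rather than $\le 0$) on it.

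First I would check condition (C1). A sign analysis shows that the only sign changes of $f$ occur at the simple zeros $\pm\tfrac13,\pm\tfrac16$, and they confine the negative values of $f$ precisely to the forbidden gaps $(-\tfrac13,-\tfrac16)\cup(\tfrac16,\tfrac13)$; hence $f\ge 0$ on all of $T$, including the stretch $(\tfrac12,1)$ where every factor is positive. Condition (C2) is automatic, since $\deg f = 11 = \tau$ leaves no coefficients $f_i$ with $i\ge 12$ to control. I would then compute the Gegenbauer expansion $f(t)=\sum_{i=0}^{11} f_i P_i^{(48)}(t)$, record $f(1)=\tfrac{35}{36}$, and verify $f_0>0$ together with the crucial identity $f(1)/f_0 = 52\,416\,000$. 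Theorem~\ref{general-lp-des} then gives $|C|\ge 52\,416\,000$ for every $11$-design $C$ with $I(C)\subset T$.

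For the equality case I would argue as in Theorem~\ref{kiss-48}. Attainment forces $\sum_{x\ne y} f(\langle x,y\rangle)=0$; since $f\ge 0$ on $T\supseteq I(C)$, every inner product of $C$ is a zero of $f$, so $I(C)\subseteq\{-1,\pm\tfrac12,\pm\tfrac13,\pm\tfrac16,0\}$. Because $C$ is an $11$-design, applying \eqref{def-ft} with $x\in C$ and $f(t)=1,t,\dots,t^{7}$ yields eight linear relations whose coefficient matrix is a nonsingular Vandermonde in the (at most) eight possible inner products. Their right-hand sides do not depend on $x$, so the distance distribution $(A_t(x))$ is the unique solution for every $x$; hence $C$ is distance invariant and the common distribution is the one in \eqref{eq:distdist} (in particular $A_{-1}=1$, so $C$ is antipodal).

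The main obstacle I anticipate is the exact arithmetic behind $f(1)/f_0 = 52\,416\,000$: as with the polynomial $h$, this equality has the flavour of a numerical coincidence on which everything hinges, so the Gegenbauer coefficients must be computed without error. A conceptual check is reassuring, however: applying the basic identity of Section~\ref{s3} to the known extremal $11$-design, on whose inner products $f$ vanishes, forces $f(1)\cdot 52\,416\,000 = f_0\cdot 52\,416\,000^2$, i.e.\ $f_0 = f(1)/52\,416\,000>0$, which both guarantees $f_0>0$ and predicts the value of the bound. The only remaining care is the sign verification on $T$, that is, confirming that the negative part of $f$ stays inside the two forbidden gaps.
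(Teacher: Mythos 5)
Your proposal is correct and follows essentially the same route as the paper: the polynomial you construct is exactly the paper's $g(t)=(t+1)t^2\left(t+\tfrac12\right)^2\left(t^2-\tfrac1{36}\right)\left(t^2-\tfrac19\right)\left(t-\tfrac12\right)^2$, applied through Theorem~\ref{general-lp-des} with $T=[-1,1)\setminus F$, with the same values $g(1)=35/36$ and $g_0=1/53913600$, and the same Vandermonde argument for the equality case. The only cosmetic difference is that you solve the distance-distribution system with $f(t)=1,t,\dots,t^7$ (eight equations for eight unknowns) where the paper uses $1,t,\dots,t^8$; both suffice.
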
 

\begin{proof} 
Let us consider the polynomial
\[ g(t)=(t+1)t^2\left(t+\frac{1}{2}\right)^2\left(t^2-\frac{1}{36}\right)\left(t^2-\frac{1}{9}\right)\left(t-\frac{1}{2}\right)^2. \]
The motivation of the choice of this polynomial is similar (and even simpler since the condition (C2) of Theorem \ref{general-lp-des} is satisfied by the choice of degree 11) to the argument for $h$ in Theorem \ref{kiss-48}.

Since $g(t) \geq 0$ for $t \in I$ and $\deg{g}=11$, it 
follows from Theorem \ref{general-lp-des} with
$T=[-1,1) \setminus F$
that 
\[ |C| \geq \frac{g(1)}{g_0}=52\,416\,000\]
(we note that $g_0=1/53913600$ and $g(1)=35/36$)
for any spherical 11-design $C \subset \mathbb{S}^{n-1}$ such that $I(C) \subset T$. 

The distance distribution of $C$ is computed in the same 
way as in Theorem \ref{kiss-48}.
\end{proof}

\begin{thm} \label{11des-48-2}
Let $C \subset \mathbb{S}^{47}$ be a spherical $11$-design with $I(C) \subset [-1,1) \setminus F^\prime$. Then
$|C| \geq 52\,416\,000$. If the equality is attained, then 
$C$ is distance invariant and its distance distribution is as given in~\eqref{eq:distdist}.
\end{thm}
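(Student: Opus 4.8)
The plan is to mirror exactly the strategy used in Theorem~\ref{11des-48}, but with a polynomial whose simple and double zeros are placed to match the geometry of the forbidden set $F^\prime=\left(-\frac12,-\frac13\right)\cup\left(\frac13,\frac12\right)$ rather than $F$. The allowed inner products now lie in the set $[-1,1)\setminus F^\prime$, which unlike $I$ permits values arbitrarily close to (but excluding) $\pm\frac12$ from the inside and forbids the band between $\frac13$ and $\frac12$. Since the attaining configuration is the same kissing code with inner products $-1,\pm\frac12,\pm\frac13,\pm\frac16,0$, and every one of these lies in $[-1,1)\setminus F^\prime$, the polynomial must still vanish at those points, but the \emph{sign pattern} forced by the new forbidden interval differs.

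Concretely, I would look for a degree-$11$ polynomial $g^\prime(t)$ that is nonnegative on $[-1,1)\setminus F^\prime$ and vanishes at the inner products of the candidate code, so that Theorem~\ref{general-lp-des} applies with $\tau=11$ and $T=[-1,1)\setminus F^\prime$. Because $\deg g^\prime=11=\tau$, condition (C2) is automatic, exactly as remarked after Theorem~\ref{general-lp-des}; this is why only nonnegativity on $T$ and the normalization $g^\prime(1)/g^\prime_0=52\,416\,000$ need to be verified. The key structural change is at $\pm\frac13$: in the present case $\frac13$ is an endpoint of a forbidden interval, so I expect a \emph{simple} zero there (to allow a sign change as $t$ crosses out of the allowed region), whereas at $\pm\frac16$, now interior to the allowed set, the zeros should be double to preserve nonnegativity. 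A natural candidate consistent with this is
\[
g^\prime(t)=(t+1)\,t^2\left(t+\frac13\right)\left(t-\frac13\right)\left(t^2-\frac{1}{36}\right)^2\left(t+\frac12\right)^2\left(t-\frac12\right),
\]
possibly up to a positive scaling, where one checks that the simple zeros at $\pm\frac13$ and at $\frac12$ produce the correct sign changes at the boundaries of $F^\prime$ while the double zeros at $0$, $\pm\frac16$, and $-\frac12$ keep $g^\prime$ nonnegative on each allowed subinterval.

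The verification then proceeds in three routine steps: first, confirm $g^\prime(t)\ge 0$ for all $t\in[-1,1)\setminus F^\prime$ by inspecting the sign of each factor on the three allowed pieces; second, compute the Gegenbauer coefficient $g^\prime_0$ via~\eqref{eq:orthorep} and the value $g^\prime(1)$, and check that the ratio equals $52\,416\,000$; third, invoke Theorem~\ref{general-lp-des} to obtain $|C|\ge 52\,416\,000$. For the equality case, the attaining $11$-design must have all its inner products among the zeros of $g^\prime$ lying in $[-1,1)\setminus F^\prime$, forcing $I(C)\subseteq\{-1,\pm\frac12,\pm\frac13,\pm\frac16,0\}$, and then distance invariance follows from \cite[Theorem 7.4]{DGS} and the distance distribution~\eqref{eq:distdist} is recovered by solving the Vandermonde system from~\eqref{def-ft}, precisely as in Theorem~\ref{kiss-48}. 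The main obstacle I anticipate is pinning down the exact multiplicities (and hence the precise factorization) so that $g^\prime$ is simultaneously nonnegative on all of $[-1,1)\setminus F^\prime$ \emph{and} normalizes to the target ratio; the sign-change requirement at the endpoints $\pm\frac13,\frac12$ must be balanced against the nonnegativity demand, and it is conceivable that an extra simple zero at $-1$ (rather than the factor $(t+1)$ appearing to first order) or a different pairing of double zeros is needed to make both conditions hold at once. Once the correct polynomial is identified, every remaining step is a direct transcription of the argument already given for $F$.
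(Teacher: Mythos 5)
Your overall strategy is exactly the paper's: apply Theorem~\ref{general-lp-des} with $T=[-1,1)\setminus F^\prime$ and a degree-$11$ polynomial that vanishes at the inner products of the candidate code and is nonnegative on $T$, so that (C2) is automatic and only (C1) and the normalization need checking. You also correctly identify most of the zero structure (double zeros at $0$ and $\pm\frac16$, simple zeros at $\pm\frac13$, simple zero at $-1$). However, your concrete candidate polynomial fails on two counts. First, it has degree $12$, not $11$: the factors $(t+1)\,t^2\,(t+\frac13)(t-\frac13)\,(t^2-\frac{1}{36})^2\,(t+\frac12)^2\,(t-\frac12)$ contribute $1+2+2+4+2+1=12$. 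This destroys the ``(C2) is automatic'' argument, and since the leading coefficient of your polynomial is $+1$, the top Gegenbauer coefficient $g^\prime_{12}$ is positive, so (C2) actually fails. Second, (C1) fails: on the allowed interval $[-1,-\frac12]$ the factor $(t-\frac12)$ is negative while $(t+1)$, $t^2$, $(t^2-\frac19)$, $(t^2-\frac{1}{36})^2$, and $(t+\frac12)^2$ are all nonnegative, so $g^\prime(t)<0$ there (e.g.\ at $t=-\frac34$).

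The source of both problems is your treatment of $\pm\frac12$: you made $-\frac12$ a double zero and $+\frac12$ a simple zero, but both are boundary points of forbidden intervals approached from one side only, so both should be simple zeros, packaged as the single factor $(t^2-\frac14)$. The paper's polynomial is
\[
u(t)=(t+1)\,t^2\left(t^2-\frac{1}{36}\right)^2\left(t^2-\frac{1}{9}\right)\left(t^2-\frac{1}{4}\right),
\]
of degree $11$. The point of pairing $(t^2-\frac19)$ with $(t^2-\frac14)$ is that on $[-1,-\frac12]$ and on $[\frac12,1)$ both factors are nonnegative, while on $[-\frac13,\frac13]$ both are nonpositive, so their product is nonnegative on every allowed piece; the two factors have opposite signs precisely on $F^\prime$, which is excluded. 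With $u$ in hand the rest of your argument (normalization $u(1)/u_0=52\,416\,000$ with $u(1)=1225/972$, $u_0=7/291133440$, and the equality-case analysis via \cite[Theorem 7.4]{DGS} and the Vandermonde system) goes through as you describe.
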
 

\begin{proof}
Similar to Theorem \ref{11des-48} with
$I(C) \subset [-1,1) \setminus F^\prime$
and the polynomial
\[
u(t)=(t+1)t^2\left(t^2-\frac{1}{36}\right)^2\left(t^2-\frac{1}{9}\right)\left(t^2-\frac{1}{4}\right). 
\]
We have $u(1) = 1225/972$ and $u_0 = 7/291133440$.
\end{proof}

\section{Other problems for codes with forbidden distances}

The technique from this paper can be probably applied in many cases where good spherical codes and designs are known. Sharp spherical codes 
(spherical $(2k-1)$-designs with $k$ distances) attain the general LP
(the Levenshtein bound \cite{Lev79,Lev98}) but 
many other codes with high symmetry do not and, therefore, could be good candidates to be optimal among codes with some appropriately forbidden distances. 

Interesting problems for spherical codes
and designs with forbidden distances come also from the 
energy minimization idea: find code(s) with minimum $h$-energy
among codes with given dimension, cardinality and set 
of forbidden distances. In could be probably interesting to consider min-max and max-min
polarization problems for such codes. We refer to the book \cite{BHS} for a nice exposition of energy and polarization problems.

\bigskip

\textbf{Acknowledgements.} The research of both authors is supported by Bulgarian NSF grant KP-06-N72/6-2023.	

\bibliographystyle{plain}
\bibliography{main}

\end{document}